\newtheorem{theorem}{Theorem}
\newtheorem{corollary}[theorem]{Corollary}
\theoremstyle{definition}
\newcommand{\G}{\mathscr{G}}
\newcommand{\E}{\mathbb{E}}
\title{A very short proof of Sidorenko's inequality for counts of homomorphism between graphs}
\author{
Lukas L\"{u}chtrath \orcidlink{0000-0003-4969-806X}\thanks{Weierstrass Institute for Applied Analysis and Stochastics, Mohrenstr.\ 39, 10117 Berlin, Germany} \\ lukas.luechtrath@wias-berlin.de \\
\and
Christian M\"{o}nch \orcidlink{0000-0002-6531-6482}\thanks{Johannes Gutenberg-Universität Mainz, Staudingerweg 9, 55128 Mainz, Germany} \\ cmoench@uni-mainz.de
}
\date{\today}
\begin{document}
\maketitle

\begin{spacing}{0.9}
\begin{abstract} 
\noindent A fundamental extremality result due to Sidorenko (Discrete Math.\ 131.1-3, 1994) states that among all connected graphs $G$ on $k$ vertices, the $k-1$-vertex star maximises the number of graph homomorphisms of $G$ into any graph $H$. We provide a new short proof of this result using only a simple recursive counting argument for trees and H\"older's inequality.

\smallskip
\noindent\footnotesize{{\textbf{AMS-MSC 2020}: 05C35, 60C05}

\smallskip
\noindent\textbf{Key Words}: Sidorenko's inequality, graph homomorphism, partial order}
\end{abstract}
\end{spacing}
Let $\sharp \operatorname{hom}(G,H)$ denote the number of graph homomorphisms from a graph $G=(V(G),E(G))$ into an \emph{image graph} $H=(V(H),E(H))$, i.e. $\operatorname{hom}(G,H)=\{f:V(G)\to V(H)$ | $(u,v)\in E(G) \Rightarrow (f(u),f(v))\in E(H)\}$. The following general inequality was proven by Sidorenko~\cite{Sidorenko_1994}:
\begin{theorem}\label{thm:sidoalt}
Let $G$ denote any connected graph on $k+1$ vertices and $S_k$ the star graph with $k$ edges. Then
\begin{equation*}
\sharp \operatorname{hom}(G,H)\leq \sharp \operatorname{hom}(S_k,H) \text{ for any graph }H.
\end{equation*}
\end{theorem}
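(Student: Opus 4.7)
The plan has two parts: reduce to trees via a spanning tree, then handle the tree case by an iterated leaf-compression argument whose engine is Young's inequality (a two-term special case of H\"older's). First, for any connected $G$ on $k+1$ vertices, I would fix a spanning tree $T \subseteq G$. Since $E(T) \subseteq E(G)$, every homomorphism $G \to H$ is also a homomorphism $T \to H$, so $\sharp \operatorname{hom}(G, H) \leq \sharp \operatorname{hom}(T, H)$. Hence it suffices to prove $\sharp \operatorname{hom}(T, H) \leq \sharp \operatorname{hom}(S_k, H)$ for any tree $T$ on $k+1$ vertices.

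For the tree case I would establish a compression lemma. Suppose $T$ is a tree and $u, v \in V(T)$ carry $a, b \geq 1$ leaf neighbors, respectively, and let $T^*$ denote the subtree obtained by removing these $a+b$ leaves. A direct recursive count gives
\[
\sharp \operatorname{hom}(T, H) = \sum_{f \colon T^* \to H \text{ hom.}} \deg_H(f(u))^a \, \deg_H(f(v))^b.
\]
Young's inequality $x^a y^b \leq \tfrac{a}{a+b} x^{a+b} + \tfrac{b}{a+b} y^{a+b}$, applied pointwise in $f$ and summed, yields
\[
\sharp \operatorname{hom}(T, H) \leq \max\bigl(\sharp \operatorname{hom}(T_u, H), \sharp \operatorname{hom}(T_v, H)\bigr),
\]
where $T_u$ (resp.\ $T_v$) is the tree obtained from $T$ by moving all $b$ leaves of $v$ onto $u$ (resp.\ all $a$ leaves of $u$ onto $v$). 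This compression operation generates a partial order on trees of size $k+1$ whose unique maximal element is $S_k$. Choosing $u, v$ at each stage to be \emph{skeleton leaves}, i.e.\ degree-one vertices of the subtree of $T$ induced by its internal vertices, strictly decreases the number of internal vertices of $T$ while never decreasing the hom count; after finitely many steps one reaches $S_k$.

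The main obstacle will be the iteration argument: Young's bound gives only the maximum of $\sharp \operatorname{hom}(T_u, H)$ and $\sharp \operatorname{hom}(T_v, H)$, not a direction-independent bound, so one must pick $u, v$ so that \emph{both} candidates strictly reduce the number of internal vertices. Selecting $u, v$ to be skeleton leaves resolves this, since in that case the ``emptied'' vertex has a unique non-leaf neighbor in $T$ and automatically becomes a leaf of the compressed tree, regardless of which direction is taken. Verifying these combinatorial facts---existence of two skeleton leaves whenever $|V(T)| > 1$ is not a star, and the correct leaf-count bookkeeping throughout the iteration---is the key technical point.
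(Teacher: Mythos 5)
Your proof is correct and follows essentially the same route as the paper: reduce to trees, then iterate a leaf-compression step at two leaves of the skeleton tree, with the inequality $\sharp\operatorname{hom}(T,H) \leq \max\bigl(\sharp\operatorname{hom}(T_u,H), \sharp\operatorname{hom}(T_v,H)\bigr)$ obtained from the recursive count. Your use of Young's inequality (weighted AM--GM) is exactly the paper's ``alternative proof''; the paper's primary version bounds $\E[X^a Y^b]$ by $\E[X^{a+b}]^{a/(a+b)}\E[Y^{a+b}]^{b/(a+b)}$ via H\"older, which the authors explicitly remark is an equivalent one-step substitute, and your skeleton-leaf bookkeeping matches theirs (they track ``leaves increase by one,'' you track ``internal vertices decrease by one,'' which is the same quantity on a fixed vertex set).
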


Define a partial order $\preceq$ on the set $\G_k$ of connected graphs on $k+1$ vertices by declaring $G'\succeq G$, if \[\sharp \operatorname{hom}(G',H)\geq \sharp \operatorname{hom}(G,H) \text{ for every graph }H.\] Aside from explicitly classifying the maximal elements of $\preceq$ and its usefulness for counting problems in graphs, Sidorenko's inequality is a valuable tool in various applications. For instance, if $A$ is any positive symmetric $n\times n$ matrix, Theorem~\ref{thm:sidoalt} implies the inequality
\[
\sum_{i,j\leq n}A^k_{i,j}\leq \sum_{i=1}^n\Big(\sum_{j=1}^n A_{i,j}\Big)^k,
\]
which is due to Hoffman~\cite{Hoffman67}. Further applications can be found in the context of noise sensitivity of Boolean functions \cite{PSW16} and in the theory of complex networks, where Theorem~\ref{thm:sidoalt} is central in deriving moment criteria for the continuity of clustering coefficients under local weak convergence \cite{Kurau22}.

Three proofs of Theorem~\ref{thm:sidoalt} can be found in the literature. Sidorenko's original proof relies on the remarkable fact that the relation \(G\preceq G'\) is in one-to-one correspondence to an ordering of integral functionals on measure spaces which can be associated with $G$ and $G'$. Certain combinatorial operations on graphs map to $\log$-convex combinations of these functionals for which Hölder-type inequalities hold. The corresponding inequalities for the homomorphism counts are then used to establish the extremality of star graphs and further relations between the homomorphism counts of concrete examples of graphs.

Csikv\'ari and Lin~\cite{csikvari_graph_2014} provide another proof of Theorem~\ref{thm:sidoalt} that is close in spirit to Sidorenko's work but uses the Wiener index (the sum of all distances between pairs of vertices in a graph) and more elementary combinatorial operations on graphs to conclude.

Finally, Levin and Peres~\cite{levin_counting_2017} prove Theorem~\ref{thm:sidoalt} by a brief and elegant probabilistic argument that connects the homomorphism count to the stationary distribution of the simple random walk on the target graph.

The aim of this note is to present a new, remarkably elementary proof of Sidorenko's bound that relies solely on a short recursive enumeration argument and Hölder's inequality on finite probability spaces.

\begin{proof}[Proof of Theorem~\ref{thm:sidoalt}]
 Fix an arbitrary image graph $H=(V(H),E(H))$. Observe that removing edges from $G$ can only increase  $\sharp \operatorname{hom}(G,H)$, hence it suffices to show
\begin{equation}\label{eq:targetineq}
\sharp \operatorname{hom}(T,H)\leq \sharp \operatorname{hom}(S_k,H),    
\end{equation}
whenever $T$ is any $k$-edge tree. Let $\mathcal{T}(k,\ell)$ denote the set of $k$-edge trees with preceqisely $\ell\leq k$ leaves. In particular, we have $\mathcal{T}(k,k)=\{S_k\}$. The bound \eqref{eq:targetineq} follows, if we can show that
\begin{equation}\label{eq:Cor28}
\max_{T\in \mathcal{T}(k,\ell)} \sharp \operatorname{hom}(T,H) \leq \max_{T\in \mathcal{T}(k,\ell+1)} \sharp \operatorname{hom}(T,H) \text{ for any }\ell<k.    
\end{equation}
To this end, we demonstrate that for every non-star $T\in \mathcal{T}(k,\ell)$ there exists some $k$-edge tree $T'$ with one more leaf that admits at least the same number of homomorphisms into $H$ as $T$. Denote by $\operatorname{sk}(T)$, the \emph{skeleton tree} of $T$, obtained by removing all leaves from $T$. Since $\ell<k$, $\operatorname{sk}(T)$ has at least $2$ leaves. We designate two leaves $b_1,b_2$ of $\operatorname{sk}(T)$ and denote by $T(b_1,b_2)$ the graph obtained from $T$ by removing all leaves adjacent to $b_1$ and $b_2$. We write \smash{$\vec{d}_1,\vec{d}_2$} for the number of leaves removed at $b_1$ and $b_2$, respectively. Calculating $\sharp \operatorname{hom}(T,H)$ by first counting all maps of $T(b_1,b_2)$ and then the possible choices for the images of the remaining leaves yields
\begin{equation}\label{eq:shortcut}
\begin{aligned}
\sharp \operatorname{hom}(T,H) & = \sum_{u,v\in V(H)} \sharp \{f\in \operatorname{hom}(T(b_1,b_2),H): f(b_1)=u,f(b_2)=v \} \operatorname{deg}(u)^{\vec{d}_1}\operatorname{deg}(v)^{\vec{d}_2} \\ &
=\sharp\operatorname{hom}(T(b_1,b_2),H) \sum_{u,v\in V(H)}p(u,v) \operatorname{deg}(u)^{\vec{d}_1}\operatorname{deg}(v)^{\vec{d}_2},
\end{aligned}
\end{equation}
where $p(u,v)$ denotes the probability that a uniformly chosen map in $\operatorname{hom}(T(b_1,b_2),H)$ maps $b_1$ to $u$ and $b_2$ to $v$. Denoting the marginals of $p(\cdot,\cdot)$ by $p_1(\cdot)$ and $p_2(\cdot)$, we conclude with the help of Hölder's inequality
\begin{equation}\label{eq:finaleq}
\begin{aligned}
& \sharp \operatorname{hom}(T,H) \\ 
& \leq \sharp\operatorname{hom}(T(b_1,b_2),H) \Bigg( \sum_{u\in V(H)}p_1(u)\operatorname{deg}(u)^{\vec{d}_1+\vec{d}_2} \Bigg)^{\frac{\vec{d}_1}{\vec{d}_1+\vec{d}_2}} \Bigg( \sum_{u\in V(H)}p_2(u)\operatorname{deg}(u)^{\vec{d}_1+\vec{d}_2}\Bigg)^{\frac{\vec{d}_2}{\vec{d}_1+\vec{d}_2}}.
\end{aligned}
\end{equation}
We assume w.l.o.g.\ that 
\[
\sum_{u\in V(H)}p_1(u)\operatorname{deg}(u)^{\vec{d}_1+\vec{d}_2}\geq \sum_{u\in V(H)}p_2(u)\operatorname{deg}(u)^{\vec{d}_1+\vec{d}_2},
\]
since if the opposite inequality holds, we may reverse the choice of $b_1,b_2$ at the beginning. Thus, \eqref{eq:finaleq} leads to
\begin{equation}\label{eq:forkoff}
\begin{aligned}
\sharp \operatorname{hom}(T,H) & \leq \sharp\operatorname{hom}(T(b_1,b_2),H) \sum_{u\in V(H)}p_1(u)\operatorname{deg}(u)^{\vec{d}_1+\vec{d}_2}\\
& = \sum_{u\in V(H)}\sharp \{f\in \operatorname{hom}(T(b_1,b_2),H): f(b_1)=u \} \operatorname{deg}(u)^{\vec{d}_1+\vec{d}_2}.
\end{aligned}
\end{equation}
The last expression equals $\sharp \operatorname{hom}(T',H)$, where $T'\in \mathcal{T}(k,\ell+1)$ is obtained from $T(b_1,b_2)$ by attaching $\vec{d}_1+\vec{d}_2$ leaves to $b_1$ and consequently has preceqisely one more leaf than $T$.
\end{proof}
In fact, the above line of reasoning also establishes a slightly stronger form of the statement that coincides with Sidorenko's original formulation of the result with only a minor refinement.
\begin{corollary}[{\cite[Theorem 1.2]{Sidorenko_1994}}]\label{cor:thm1.2}
Let $T$ denote any $k$-edge tree, $S_k$ the star graph with $k$ edges and $T_{k-1,1}$ any tree obtained by attaching a single leaf to a leaf of $S_{k-1}$. Then
\(
T\preceq T_{k-1,1} \preceq S_k.
\)
\end{corollary}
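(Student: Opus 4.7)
The second inequality $T_{k-1,1}\preceq S_k$ is immediate from Theorem~\ref{thm:sidoalt} applied to the $k$-edge tree $T_{k-1,1}$. The first inequality $T\preceq T_{k-1,1}$ is the new content and is only nontrivial when $T\ne S_k$. I would first reduce $T$ to a double star, using the iteration already built into the proof of Theorem~\ref{thm:sidoalt}, and then compare two double stars by a short pointwise inequality refining the final H\"older step from that proof.

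\textbf{Reduction to double stars.} Iterating the construction $T\mapsto T'$ from the proof of Theorem~\ref{thm:sidoalt}---which produces some $T'$ with one additional leaf satisfying $T\preceq T'$---any $T\in \mathcal{T}(k,\ell)$ with $\ell<k-1$ reduces, after finitely many steps, to some $T^{\ast}\in \mathcal{T}(k,k-1)$ with $T\preceq T^{\ast}$. The elements of $\mathcal{T}(k,k-1)$ are exactly the \emph{double stars} $D_{a,b}$ with $a+b=k-1$ and $a,b\ge 1$, i.e.\ trees consisting of two adjacent internal vertices carrying $a$ and $b$ leaves, respectively. It therefore suffices to prove $D_{a,b}\preceq D_{k-2,1}=T_{k-1,1}$ for every such pair $(a,b)$.

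\textbf{Pointwise comparison of double stars.} Counting homomorphisms by first fixing the images of the two internal vertices yields
\[
\sharp\operatorname{hom}(D_{a,b},H) = \sum_{\{u,v\}\in E(H)} \bigl(\operatorname{deg}(u)^a\operatorname{deg}(v)^b + \operatorname{deg}(u)^b\operatorname{deg}(v)^a\bigr),
\]
and analogously for $T_{k-1,1}=D_{k-2,1}$. Hence $D_{a,b}\preceq T_{k-1,1}$ reduces to the pointwise inequality $x^a y^b + x^b y^a\le x^{k-2}y + xy^{k-2}$ for $x,y\ge 0$, which follows from the factorisation
\[
(x^{k-2}y+xy^{k-2})-(x^a y^b+x^b y^a) = xy\,(x^{a-1}-y^{a-1})(x^{b-1}-y^{b-1})\ge 0,
\]
the two rightmost factors sharing the sign of $x-y$ because $a-1,b-1\ge 0$. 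Recognising this factorisation is the only genuinely new ingredient: it replaces the final H\"older step in the proof of Theorem~\ref{thm:sidoalt} and is exactly what is needed to terminate the chain one step before $S_k$, at $T_{k-1,1}$.
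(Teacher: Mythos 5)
Your proof is correct, and it replaces the paper's key step with a genuinely different and arguably more elementary argument. Where you part ways with the paper is in the comparison of double stars. The paper writes $\sharp\operatorname{hom}(D_{a,b},H)$ (up to the common factor $\sharp\operatorname{hom}(K_2,H)$) as $\phi(a/(k-1))$ for a function $\phi(p)=\E\big[e^{pg(U)+(1-p)g(V)}\big]$ on $[0,1]$, exploits the exchangeability of $(U,V)$ inherited from the symmetric skeleton $K_2$ to get $\phi(p)=\phi(1-p)$, computes $\phi''\ge 0$, and concludes that $\phi$ is nondecreasing on $[1/2,1]$; this in fact shows the whole of $\mathcal{T}(k,k-1)$ is totally ordered under $\preceq$, with maximum at $D_{k-2,1}$. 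You instead expand $\sharp\operatorname{hom}(D_{a,b},H)$ as a sum over edges $\{u,v\}$ of $H$ of the symmetric monomial $\deg(u)^a\deg(v)^b+\deg(u)^b\deg(v)^a$ and compare summand by summand via the polynomial identity $(x^{p+q}+y^{p+q})-(x^py^q+x^qy^p)=(x^p-y^p)(x^q-y^q)\ge 0$. Both arguments ultimately hinge on the symmetry of the edge skeleton, but yours trades the paper's convexity-of-$\phi$ calculus for a purely algebraic, per-edge (pointwise) inequality, which is slightly sharper in form and avoids any differentiation; what it does not directly give you is the total order on $\mathcal{T}(k,k-1)$ that the paper records as a by-product (one would need a further telescoping variant of your factorization to recover that). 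The reduction of a general $T$ to a double star by iterating the theorem's $T\mapsto T'$ construction, and the observation that $T_{k-1,1}\preceq S_k$ is just Theorem~\ref{thm:sidoalt}, are both used implicitly by the paper as well; you simply make them explicit.
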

\begin{proof}
All graphs in $\mathcal{T}(k,k-1)$ have $K_2$ as their skeleton tree. Since the latter graph is symmetric under swapping $b_1$ and $b_2$, it follows that the distribution $p(\cdot,\cdot)$ used in \eqref{eq:shortcut} is symmetric. Let $(U,V)$ denote a pair of random variables with distribution $p(\cdot,\cdot)$. Then
\[
\sum_{u,v\in V(H)}p(u,v) \operatorname{deg}(u)^{\vec{d}_1}\operatorname{deg}(v)^{\vec{d}_2}=\E\Big[\big(\operatorname{deg}(U)^{\vec{d}_1+\vec{d}_2}\big)^{\frac{\vec{d}_1}{\vec{d}_1+\vec{d}_2}} \big(\operatorname{deg}(V)^{\vec{d}_1+\vec{d}_2}\big)^{\frac{\vec{d_2}}{\vec{d}_1+\vec{d}_2}} \Big]=\E\big[\textup{e}^{p_1 g(U)+(1-p_1)g(V)}\big]
\]
for $g(v)=(\vec{d}_1+\vec{d}_2)\log(\operatorname{deg}(v))$ and $p_1=\frac{\vec{d}_1}{\vec{d}_1+\vec{d}_2}.$ Since $p(u,v)=p(v,u)$, we have that the map
$\phi:[0,1]\to[0,\infty)$ given by 
\[
	\phi(p)=\E\big[\textup{e}^{p g(U)+(1-p)g(V)}\big]
\]
is symmetric. The proof of Theorem~\ref{thm:sidoalt} shows that $\phi$ attains its maxima at $0$ and $1$. Furthermore,
\[
\phi'(p)=\E\big[(g(U)-g(V))\textup{e}^{p g(U)+(1-p)g(V)} \big] \text{ and } \phi''(p)=\E\big[(g(U)-g(V))^2\textup{e}^{p g(U)+(1-p)g(V)}\big],
\]
hence $\phi$ is convex and attains its minimum at $p=1/2$. Consequently, $\phi$ is non-decreasing on $[1/2,1]$, which implies that $\mathcal{T}(k,k-1)$ is totally ordered with respect to $\preceq$, where the minimum is attained at $\vec{d}_1=\vec{d}_2=(k-1)/2$ if $k$ is odd and at $\vec{d}_1=k/2, \vec{d}_2=k/2-1$ if $k$ is even, and the maximum is attained at $\vec{d}_1=k-2, \vec{d}_2=1$.
\end{proof}
We conclude this note by remarking that one can replace the use of H\"older's inequality in the proof of Theorem~\ref{thm:sidoalt} by an equally short argument using the weighted AM-GM inequality. It is an elementary analytic exercise to see that the two inequalities are in fact equivalent, so both variants of our argument are equally fundamental.  
\begin{proof}[Alternative proof of Theorem~\ref{thm:sidoalt} without H\"older's inequality]
Employing the notation from the proof of Corollary~\ref{cor:thm1.2}, we see that
\[
\phi(p)\leq p\E\big[\textup{e}^{g(U)}\big]+(1-p)\E\big[\textup{e}^{g(V)}\big]\leq \max\{\E\big[\textup{e}^{g(U)}\big],\E\big[\textup{e}^{g(V)}\big]\},\quad p\in[0,1]
\]
where the first inequality follows form applying the weighted AM-GM inequality. The terms on the right hand side is preceqisely the term appearing on the right hand side of \eqref{eq:forkoff}. 
\end{proof}

\noindent\textbf{\large Funding acknowledgement.} LL received support from the Leibniz Association within the Leibniz Junior Research Group on \textit{Probabilistic Methods for Dynamic Communication Networks} as part of the Leibniz Competition (grant no.\ J105/2020). CM's research is funded by Deutsche Forschungsgemeinschaft (DFG, German Research Foundation) – SPP 2265 443916008.

\section*{References}
\renewcommand*{\bibfont}{\footnotesize}
\printbibliography[heading = none]

@incollection {PSW16,
    AUTHOR = {Gopalan, Parikshit and Servedio, Rocco A. and Wigderson, Avi},
     TITLE = {Degree and sensitivity: tails of two distributions},
 BOOKTITLE = {31st {C}onference on {C}omputational {C}omplexity},
    SERIES = {LIPIcs. Leibniz Int. Proc. Inform.},
    VOLUME = {50},
     PAGES = {Art. No. 13, 23},
 PUBLISHER = {Schloss Dagstuhl. Leibniz-Zent. Inform., Wadern},
      YEAR = {2016},
      ISBN = {978-3-95977-008-8},
   MRCLASS = {94D05 (68R10 94C10)},
  MRNUMBER = {3540814},
MRREVIEWER = {Udrea\ P\u aun},
}

@article {Hoffman67,
    AUTHOR = {Hoffman, A. J.},
     TITLE = {Three observations on nonnegative matrices},
   JOURNAL = {J. Res. Nat. Bur. Standards Sect. B},
  FJOURNAL = {Journal of Research of the National Bureau of Standards.
              Section B. Mathematics and Mathematical Physics},
    VOLUME = {71B},
      YEAR = {1967},
     PAGES = {39--41},
      ISSN = {0022-4340},
   MRCLASS = {15.60},
  MRNUMBER = {212042},
MRREVIEWER = {David\ Carlson},
}

@article {levin_counting_2017,
    AUTHOR = {Levin, David A. and Peres, Yuval},
     TITLE = {Counting walks and graph homomorphisms via {M}arkov chains and
              importance sampling},
   JOURNAL = {Amer. Math. Monthly},
  FJOURNAL = {American Mathematical Monthly},
    VOLUME = {124},
      YEAR = {2017},
    NUMBER = {7},
     PAGES = {637--641},
      ISSN = {0002-9890,1930-0972},
   MRCLASS = {60J10 (05C30 05C38 05C60)},
  MRNUMBER = {3681593},
MRREVIEWER = {Uwe\ R\"osler},
       DOI = {10.4169/amer.math.monthly.124.7.637},
       URL = {https://doi.org/10.4169/amer.math.monthly.124.7.637},
}

@article {csikvari_graph_2014,
    AUTHOR = {Csikv\'ari, P\'eter and Lin, Zhicong},
     TITLE = {Graph homomorphisms between trees},
   JOURNAL = {Electron. J. Combin.},
  FJOURNAL = {Electronic Journal of Combinatorics},
    VOLUME = {21},
      YEAR = {2014},
    NUMBER = {4},
     PAGES = {Paper 4.9, 38},
      ISSN = {1077-8926},
   MRCLASS = {05C05 (05C30 05C35 05C60 05C85)},
  MRNUMBER = {3284058},
MRREVIEWER = {David\ Burns},
       DOI = {10.37236/4096},
       URL = {https://doi.org/10.37236/4096},
}

@article {Sidorenko_1994,
    AUTHOR = {Sidorenko, Alexander},
     TITLE = {A partially ordered set of functionals corresponding to
              graphs},
   JOURNAL = {Discrete Math.},
  FJOURNAL = {Discrete Mathematics},
    VOLUME = {131},
      YEAR = {1994},
    NUMBER = {1-3},
     PAGES = {263--277},
      ISSN = {0012-365X,1872-681X},
   MRCLASS = {05C90 (82B10)},
  MRNUMBER = {1287737},
MRREVIEWER = {H.\ N. V. Temperley},
       DOI = {10.1016/0012-365X(94)90388-3},
       URL = {https://doi.org/10.1016/0012-365X(94)90388-3},
}

@article {Kurau22,
    AUTHOR = {Kurauskas, Valentas},
     TITLE = {On local weak limit and subgraph counts for sparse random
              graphs},
   JOURNAL = {J. Appl. Probab.},
  FJOURNAL = {Journal of Applied Probability},
    VOLUME = {59},
      YEAR = {2022},
    NUMBER = {3},
     PAGES = {755--776},
      ISSN = {0021-9002,1475-6072},
   MRCLASS = {05C80 (05C82 60C05)},
  MRNUMBER = {4480079},
       DOI = {10.1017/jpr.2021.84},
       URL = {https://doi.org/10.1017/jpr.2021.84},
}

\end{document}